\documentclass[11pt]{article}
\usepackage{amsmath,amssymb,amsthm, latexsym,color,epsfig,enumerate,a4, graphicx}
\usepackage{changepage}
\usepackage{tikz}
\usepackage{xspace}
\usepackage{bbding}
\usepackage[ruled,vlined, linesnumbered]{algorithm2e} 

\oddsidemargin  0pt
\evensidemargin 0pt
\marginparwidth 40pt
\marginparsep 10pt
\topmargin 0pt
\headsep 0pt

\textheight 8.75in
\textwidth 6.6in

\theoremstyle{plain}
\newtheorem{theorem}{Theorem}[section]
\newtheorem{lemma}[theorem]{Lemma}
\newtheorem{claim}[theorem]{Claim}

\theoremstyle{definition}

\newcommand{\calH}{\ensuremath{\mathcal H}}



\date{}
\title{\vspace{-0.7cm}Optimal 
induced universal graphs for bounded-degree graphs}
\author{
	Noga Alon 
		\thanks{
		Sackler School of Mathematics and Blavatnik School of Computer Science, Tel Aviv University, Tel Aviv 69978, Israel. Email: {\tt nogaa@tau.ac.il}. 
		Research supported in part by a USA-Israeli BSF grant 2012/107, by an ISF grant 620/13 and by the Israeli I-Core program.
		}
	\and
	Rajko Nenadov
		\thanks{
		School of Mathematical Sciences, Monash University, Melbourne, Australia. Email: {\tt rajko.nenadov@monash.edu}.
		} 
}

\begin{document}
\maketitle

\begin{abstract}

We show that for any constant $\Delta \ge 2$, there exists a graph $G$
with $O(n^{\Delta / 2})$ vertices which contains every $n$-vertex graph
with maximum degree $\Delta$ as an induced subgraph. For odd
$\Delta$ this significantly improves the best-known earlier bound of
Esperet et al. and is optimal up to a constant
factor, as it is known that any such graph must have at least
 $\Omega(n^{\Delta/2})$
vertices.

Our proof builds on the approach of Alon and Capalbo (SODA 2008) together
with several additional ingredients. The construction of $G$ is explicit
and is based on an appropriately defined  
composition of high-girth expander graphs. The proof also provides
an efficient deterministic procedure for finding, for any given 
input graph $H$ on $n$ vertices with maximum degree at most
$\Delta$,
an induced subgraph of
$G$ isomorphic to $H$.

\end{abstract}


\section{Introduction}

Given a finite family of graphs $\mathcal{H}$, a graph $G$ is
\emph{induced universal} for $\calH$ if for every $H \in \mathcal{H}$
it contains an induced subgraph isomorphic to $H$. This notion was
introduced by Rado \cite{rado1964universal} in the 1960s. Observe
that the number of induced subgraphs of $G$ of a certain size
depends only on the number of vertices of $G$, thus the problem of
determining the smallest possible number of vertices $g_v(\calH)$
of an $\calH$-induced-universal graph arises naturally. Indeed,
this problem
has received a considerable amount of 
attention in the past decades for various
families of graphs, including the family of all graphs with $n$ vertices
\cite{alonasymptotically,alstrup2015adjacency,moon1965minimal},
forests and graphs with bounded arboricity
\cite{alstrup2015optimal,alstrup2002small}, bounded-degree
graphs \cite{butler2009induced,esperet2008induced}, planar
graphs \cite{chung1990universal,gavoille2007shorter}, and more. (See
\cite{alstrup2015adjacency} for a detailed summary of the 
known results). We 
briefly mention
some of the highlights.

Possibly the most basic family is the family $\calH(n)$ of all graphs
on $n$ vertices. The problem of estimating $g_v(\calH(n))$, first
studied by Moon \cite{moon1965minimal} and mentioned by 
Vizing \cite{vizing1968some}
in 1968, has been investigated over the years in several papers. 
In a recent work of
Alstrup, Kaplan, Thorup and Zwick \cite{alstrup2015adjacency}
the authors determined this function up to a constant factor,
showing it is at most 
$16 \cdot 2^{n/2}$. (As observed by Moon \cite{moon1965minimal}, a
simple counting argument implies that $2^{(n-1)/2}$ is a lower
bound).
Very
recently, the first author 
\cite{alonasymptotically} further improved the upper
bound  on $g_v(\calH(n))$  to $(1 + o(1))2^{(n - 1)/2}$, thus matching
the lower bound up to a lower order additive term. 
Another family of graphs
which has been studied extensively is the family of all
trees with $n$ vertices. This was first considered in the work of Kannan,
Naor and Rudich \cite{kannan1992implicat}, and subsequent improvements
were obtained by Chung \cite{chung1990universal} and by Alstrup and Rauhe
\cite{alstrup2002small}. Finally, a tight bound of $\Theta(n)$ was recently
proven by Alstrup, Dahlgaard and Knudsen \cite{alstrup2015optimal}.

In the present paper we consider the family of bounded-degree 
graphs on $n$
vertices. Given $\Delta \ge 2$ and $n \in \mathbb{N}$,
let $\calH(n, \Delta)$ denote the family of all graphs on $n$ vertices
with maximum degree at most $\Delta$. One should think of $\Delta$ being
a constant and $n$ being an arbitrary (large) number. 
This family has been studied in several papers that discuss
the smallest possible number of vertices of an induced universal
graph and the smallest possible number of edges in a
{\em universal graph}.
(A graph is universal
for a family $\calH$ if it contains every $H \in \calH$ as a subgraph
(not necessarily induced)). The latter 
was studied in a series of papers \cite{alon2007sparse,alon2001near} culminating in the work of the
first author and 
Capalbo \cite{alon2008optimal} where it is shown that this minimum
is $\Theta(n^{2-2/\Delta}).$

Induced universal graphs for bounded-degree graphs were first studied
by Butler \cite{butler2009induced}. Using a simple counting argument
he observed that
\begin{equation} \label{eq:bound_gv}
	g_v(\calH(n, \Delta)) \ge c n^{\Delta / 2},
\end{equation}
for some constant $c = c(\Delta) > 0$ depending only on
$\Delta$. His main result is that this is indeed the right order
of magnitude in the case where $\Delta$ is even. For odd $\Delta$ 
his result only gives the upper bound 
$g_v(\calH(n, \Delta)) = O(n^{\Delta/2 + 1/2})$,
which simply follows from the bound obtained for the family $\calH(n,
\Delta + 1)$. Using the reduction of Chung \cite{chung1990universal}
which connects universal and induced universal graphs together with
the
sparse universal graphs from \cite{alon2008optimal}, Esperet,
Labourel and Ochem \cite{esperet2008induced} improved the bound for
odd $\Delta$ to $O(n^{\Delta/2 + 1/2 - 1/\Delta})$ and mentioned
the natural problem of closing the gap between the upper and lower
bounds. Here we settle
this problem by giving a construction which matches the lower
bound \eqref{eq:bound_gv}
up to a constant factor.

\begin{theorem} \label{thm:main}
For every integer $\Delta \ge 2$ there exists a constant $c_\Delta >
0$, such that for every $n \in \mathbb{N}$ there is an $\mathcal{H}(n,
\Delta)$-induced-universal graph $\Gamma$ with at most $c_\Delta n^{\Delta
/ 2}$ vertices.
\end{theorem}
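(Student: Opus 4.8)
The plan is to dispose of even $\Delta$ immediately --- this is Butler's theorem~\cite{butler2009induced}, and it also emerges as the degenerate case of the construction below --- and to concentrate on odd $\Delta$, say $\Delta = 2d+1$. For odd $\Delta$ the bound obtained for free by passing to $\calH(n,\Delta+1)$ is $O(n^{(\Delta+1)/2})$, a full factor $n^{1/2}$ too large, so the real task is to recover that $n^{1/2}$. As a first step I would split a given $H\in\calH(n,\Delta)$, via Petersen's $2$-factor theorem, into $d+1$ edge-disjoint pieces of maximum degree at most $2$ (disjoint unions of paths and cycles): a ``bulk'' $F_1\cup\dots\cup F_d$ and a ``remainder'' $F_{d+1}$. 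The bulk lies in $\calH(n,2d)$, hence by Butler's even-degree construction (or the Alon--Capalbo routing scheme~\cite{alon2008optimal}) can be placed into a host on $O(n^{d})$ vertices; the whole difficulty is that the remainder $F_{d+1}$ --- which, treated on its own as a maximum-degree-$2$ graph, would again demand $\Theta(n)$ vertices --- must be realized \emph{simultaneously with} the bulk at an extra cost of only $O(n^{1/2})$ vertices. The mechanism: orient each path and cycle of $F_{d+1}$, so that every vertex has at most one ``successor'' along $F_{d+1}$ and $E(F_{d+1})$ is recovered from the successor function; the $\approx\log n$ bits that would name a vertex's successor are then split as $\tfrac12\log n + O(1)$ bits stored in a fresh auxiliary coordinate of size $O(n^{1/2})$ plus another $\tfrac12\log n$ bits that are \emph{read off the bulk-coordinates} of the vertex and its successor.

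Accordingly, the host $\Gamma$ should be an explicitly described \emph{composition} of high-girth expander graphs: $d$ ``copies'' at scale $\Theta(n)$, each built from a constant-degree high-girth expander $R$ in the spirit of~\cite{alon2008optimal} and responsible for one piece $F_i$, composed with one auxiliary ``copy'' at scale $\Theta(n^{1/2})$, so that $|V(\Gamma)| = \Theta(n^{d}\cdot n^{1/2}) = \Theta(n^{\Delta/2})$. Its adjacency rule is not a plain coordinatewise ``or'', but is engineered so that: (i) each scale-$\Theta(n)$ copy realizes the corresponding $F_i$ as an induced subgraph with bounded congestion; (ii) the auxiliary coordinate of a vertex stores an $O(n^{1/2})$-sized ``fingerprint'' determined by the scale-$\Theta(n)$ coordinates of its $F_{d+1}$-successor, and the rule declares a pair of vertices an $F_{d+1}$-edge when each one's fingerprint, checked against the other's scale-$\Theta(n)$ coordinates, confirms the successor relation --- the composition being designed precisely so that this single $O(n^{1/2})$-valued test, together with the information already present in the scale-$\Theta(n)$ coordinates, pins the successor down uniquely; and (iii) two vertices of $\Gamma$ are adjacent \emph{exactly} when the pieces $F_1,\dots,F_{d+1}$ demand it, so that the embedding is automatically induced. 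High girth is what makes (i) and (iii) compatible: being locally tree-like, $R$ has no short cycles, which both permits routing the paths and cycles of each $F_i$ with controlled congestion and rules out unintended adjacencies.

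The embedding of a given $H$ then runs deterministically and greedily. After computing $H = F_1\cup\dots\cup F_{d+1}$ and orienting the components of $F_{d+1}$, process the vertices of $H$ (grouped by connected component, say) one at a time, maintaining a partial assignment of scale-$\Theta(n)$ coordinates; at each step choose the coordinates of the new vertex so as to respect the path/cycle structure of every $F_i$, so that the fingerprint reading in (ii) will correctly encode $F_{d+1}$, and so as to be consistent with --- and avoid the few conflicts created by --- the choices already made. Finally set each auxiliary coordinate to the fingerprint of the corresponding successor and verify that the resulting map $V(H)\to V(\Gamma)$ is injective and is an induced embedding. That a valid choice always exists at each greedy step is where expansion and high girth do the real work: vertex-expansion of $R$ guarantees that the placements excluded at a given step --- by congestion, by a fingerprint constraint, or by the requirement that non-edges of $H$ map to non-edges of $\Gamma$ --- leave a constant fraction of the options open, while high girth confines the effect of each earlier choice to a bounded local region; all the constant-factor losses are absorbed into $c_\Delta$.

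The step I expect to be the main obstacle is exactly this, carried out \emph{at the optimal scale}: designing the composition and the fingerprint map, and proving that the greedy assignment of scale-$\Theta(n)$ coordinates can always be completed so that an auxiliary coordinate of size only $O(n^{1/2})$ suffices to recover every successor unambiguously --- i.e.\ that the bulk-coordinates genuinely carry the missing $\tfrac12\log n$ bits about the remainder --- all without ever violating the induced-subgraph constraint. Since the lower bound~\eqref{eq:bound_gv} leaves no polynomial slack, there is no room to enlarge the auxiliary coordinate for safety, so the accounting has to be tight. By contrast, the reduction to odd $\Delta$, the handling of the maximum-degree-$2$ pieces, the high-girth and expansion estimates, and the non-induced routing of~\cite{alon2008optimal} are standard or routine adaptations; it is the combination --- induced embedding, odd $\Delta$, and an optimal constant factor, i.e.\ wringing the missing $n^{1/2}$ out of the expander coordinates rather than paying for it --- that is the heart of the argument.
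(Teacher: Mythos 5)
Your proposal takes a genuinely different route from the paper, and there is a concrete gap in the key mechanism.

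The paper does not split $H$ asymmetrically into a ``bulk'' in $\calH(n,2d)$ plus one remainder piece. It uses the decomposition of Theorem~\ref{thm:decompose} (from \cite{alon2007sparse}), which produces $\Delta$ symmetric \emph{thin} subgraphs $H_1,\dots,H_\Delta$ with every edge of $H$ contained in \emph{exactly two} of them. Correspondingly, $\Gamma$ has $\Delta$ main coordinates, each a copy of a power of a high-girth expander $R_m$ of size only $\Theta(\sqrt n)$, and adjacency in $\Gamma$ requires agreement in two coordinates. Each coordinate carries ``half an edge''; two halves of size $\Theta(\sqrt n)$ synthesize the $\Theta(n)$-scale information needed to represent an edge. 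Because each coordinate is too small to host an injective image of $H_i$, the maps $f_i\colon H_i\to R_m^4$ are only homomorphisms, and the bulk of the proof (Lemma~\ref{lemma:f_walk}, the girth bound, the powerset component $\mathcal P([4d^4])$, and the constant-size expander $R_z$) is spent showing one can choose these homomorphisms so that the few ``undesired'' edges created by non-injectivity never get confirmed in two coordinates simultaneously.

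The gap in your proposal is the fingerprint step. You posit that the $O(\sqrt n)$-valued auxiliary coordinate of a vertex $v$ ``stores an $O(n^{1/2})$-sized fingerprint determined by the scale-$\Theta(n)$ coordinates of its $F_{d+1}$-successor,'' and that ``the information already present in the scale-$\Theta(n)$ coordinates'' supplies the missing bits so that the successor is pinned down uniquely. But $F_{d+1}$ is edge-disjoint from $F_1,\dots,F_d$, so knowing where $v$ sits in the bulk coordinates (which is governed entirely by the components of $F_1,\dots,F_d$ through $v$) tells you nothing intrinsic about where $v$'s $F_{d+1}$-successor $w$ sits in the bulk coordinates. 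The fingerprint therefore has to distinguish among $\Theta(n^d)$ candidate positions for $w$ using only $\Theta(\sqrt n)$ values, leaving $\Theta(n^{d-1/2})$ vertices of $\Gamma$ that pass the test; you would then need the greedy embedding to steer all $n$ vertices so that no non-edge of $H$ ever lands on such a collision, \emph{and} simultaneously respect all the bulk constraints and the inducedness requirement. You give no mechanism to engineer the necessary correlation between a vertex's bulk placement and its successor's bulk placement --- and since these placements are driven by disjoint edge sets, it is not clear such a correlation can be maintained globally. The paper sidesteps this entirely: there is no asymmetric fingerprint, no attempt to recover $\frac12\log n$ bits from coordinates that do not contain them; instead, every coordinate is at the $\sqrt n$ scale, every edge is a two-coordinate conjunction, and the work goes into controlling where two $\sqrt n$-scale homomorphisms accidentally agree.
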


The construction of $\Gamma$ is explicit and the proof supplies
a polynomial
time deterministic procedure for finding, for any given 
$H \in \calH(n, \Delta)$, an induced subgraph of $\Gamma$
isomorphic to $H$.

The rest of the 
paper is organised as follows. In the next section we discuss the
main challenges for the case of odd $\Delta$ and present a 
rough overview of the
construction and proof. In Section \ref{sec:preliminaries} 
we introduce the main building
block in our construction, high-girth expander graphs, and state some
of their properties. In Section \ref{sec:decomposition}
we state the decomposition result from \cite{alon2007sparse},
an analogue of Petersen's theorem for $(2k+1)$-regular graphs. The
construction of $\Gamma$ is given in Section \ref{sec:construction}
and in Section \ref{sec:proof} we prove that $\Gamma$ is indeed induced
universal. Finally, Section \ref{sec:algo} briefly summarises the 
algorithmic
aspects of the proof. Throughout the paper we make no attempts to optimise
the constants.

\medskip
\noindent
\textbf{Notation. }  Given a graph $G$ and an integer $k \ge 1$, the
$k$-th power $G^k$ of $G$ is the graph on the same vertex set as $G$
where two distinct vertices are adjacent if they are at distance at most
$k$ in $G$. We say that a sequence of vertices $(v_1, v_2, \ldots, v_k)$,
$v_i \in G$, forms a \emph{walk} if $\{v_i, v_{i+1}\} \in G$ for every
$i < k$. If additionally no two vertices in the sequence are the same,
it is a \emph{path}. For two graphs $G$ and $H$, we say that
a mapping $f \colon H \rightarrow G$ of the vertices of $H$ into 
the vertices of
$G$ is a \emph{homomorphism} if $\{v, w\} \in H$ implies $\{f(v), f(w)\}
\in G$. If $f$ is injective we say that it is an \emph{embedding}, and
if furthermore $\{v, w\} \in H$ iff $\{f(v), f(w)\} \in G$ then we say
it is an \emph{induced embedding}.

Finally, let $\mathcal{P}(S)$ denote the powerset of a 
finite set $S$ (i.e. the
family of all subsets of $S$), and put $[n] := \{0, \ldots, n\}$ for $n
\in \mathbb{N}$.

\section{Overview of the proof} \label{sec:overview}

In order to demonstrate the main ideas and challenges in our work 
it is instructive
to first review the approach of Butler \cite{butler2009induced}
which determines $g_v(\calH(n, \Delta))$ up to the constant factor
for even $\Delta$. At the heart of his proof lies the classical
decomposition result of Petersen (see \cite{lovasz2009matching}), which
states that every $(2k)$-regular graph can be decomposed into $k$
edge-disjoint $2$-regular graphs. Since each $2$-regular graph is a
collection of cycles, it is not too difficult to construct a graph $F$
with $O(n)$ vertices which is induced universal for such graphs with
$n$ vertices. Now apply the idea of Chung \cite{chung1990universal}
(implicit already in \cite{kannan1992implicat}):
a graph $\Gamma$ is defined on the vertex set $(V(F))^{\Delta/2}$ and
two vertices $\mathbf{x} = (x_1, \ldots, x_{\Delta/2})$ and $\mathbf{y}
= (y_1, \ldots, y_{\Delta/2})$ are adjacent iff they are adjacent
in
at least one coordinate (in graph $F$). Such a graph $\Gamma$
has $O(n^{\Delta/2})$ vertices. An induced embedding of $H \in
\mathcal{H}(n, \Delta)$ is obtained by decomposing $H$ into
$\Delta/2$ $2$-regular
subgraphs and by embedding 
each subgraph into the copy of $F$ corresponding to a separate
coordinate.

However, this strategy does not work if $\Delta$ is odd. Indeed, in
this case one
cannot even have $\Delta/2$ coordinates. A simple solution, inspired by
the work of Alon and Capalbo \cite{alon2008optimal}, is to double the
number of coordinates: instead of having each coordinate of $\Gamma$
correspond to a graph of size $O(n)$ and be responsible for the
existence of an edge, we let each coordinate correspond to a graph
$F'$ of size $O(\sqrt{n})$ and be responsible for \emph{half} an
edge. In other words, graph $\Gamma$ has $\Delta$ coordinates and in
order to have an edge between $\mathbf{x}$ and $\mathbf{y}$ we require
that they are adjacent in at least two of them. This gives  a graph
$\Gamma$ with the desired number of vertices $O(n^{\Delta/2})$, 
overcoming the divisibility
issue. An embedding of $H$ into $\Gamma$ is defined as before
(with respect to the new decomposition, which we present shortly).

Several problems in this approach 
are evident. First, the decomposition result of
Petersen no longer holds. Instead, we use the decomposition result
of \cite{alon2007sparse} which states that every $\Delta$-regular
graph can be decomposed into $\Delta$ subgraphs (rather than $\Delta/2$,
as in Petersen's theorem) such that each is a collection of `almost'
cycles and each edge of $H$ belongs to exactly two of them. Note that
this is exactly what we need since the endpoints of each edge in $H$
have to be adjacent in at least two coordinates.

Second, we cannot guarantee that the embedding of each such subgraph
into $F'$ is induced simply because the number of vertices of $F'$
is too small. In fact, we have to take a \emph{homomorphism} of
each subgraph into $F'$, rather than an injective embedding, which
potentially may create halves of some undesired edges. Our aim is to show
that by choosing homomorphisms of other subgraphs into $F'$ carefully
we can simultaneously avoid creating the other half of any such undesired
edge. In order to do that, graph $F'$ has to allow enough flexibility
and a simple construction such as the one from \cite{butler2009induced}
does not suffice. It turns out that taking $F'$ to be a high-girth
expander is helpful here.
However, even with such a graph $F'$ we are
only able to show that most of the undesired edges will have at most
half of an edge present in $\Gamma$, but a few will slip past. Finally, we
take care of those by introducing additional `layers' of coordinates
in $\Gamma$ which correspond to constant size structures (with each
layer dealing with a different type of the remaining undesired
edges). 
These do not have a significant impact on the number of vertices as they have a constant size.
The precise details require some careful analysis, as described in
the subsequent sections.

\section{Preliminaries} \label{sec:preliminaries}

\subsection{High-girth expander graphs}

In this section we describe the main building block in our construction, the so-called \emph{Ramanujan} graphs, and state the main lemma (Lemma \ref{lemma:f_walk}) used to find an induced embedding of a graph $H$.

A $d$-regular graph $F$ is a \emph{Ramanujan} graph if all non-trivial eigenvalues have absolute value at most $2 \sqrt{d - 1}$. It is well-known that non-bipartite Ramanujan graphs are good expanders (e.g. see Lemma 9.2.4 in \cite{alon2004probabilistic}) and there are known explicit constructions of such graphs with high girth (see \cite{lubotzky1988ramanujan,margulis1988explicit}). It should be noted that any high-girth expander with constant maximum degree would serve our purpose, thus the latter is the reason why we opt for Ramanujan graphs. See \cite{davidoff2003elementary} for a self-contained account on Ramanujan graphs and the necessary background. The following statement is a simplified version of the theorem of Lubotzky, Phillips and Sarnak \cite{lubotzky1988ramanujan}. 


\begin{theorem} \label{thm:Ramanujan}
	Let $p$ and $q > 2 \sqrt{p}$ be primes congruent to $1$ modulo $4$, such that $p$ is a quadratic residue modulo $q$. Then there is an explicit construction of a non-bipartite $(p+1)$-regular Ramanujan graph with $m  = q(q^2 - 1)/2$ vertices and girth at least $\frac{1}{2} \log_{p} m$.
\end{theorem}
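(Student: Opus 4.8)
The plan is to take $F$ to be the Cayley graph of $G := PSL_2(\mathbb{Z}/q\mathbb{Z})$ constructed by Lubotzky, Phillips and Sarnak: one verifies the regularity, the vertex count, non-bipartiteness and the girth lower bound directly, and imports the Ramanujan eigenvalue bound from the theory of automorphic forms. First I would build the connection set. By Jacobi's four-squares formula, since $p \equiv 1 \pmod 4$ there are exactly $p+1$ integer solutions of $a_0^2 + a_1^2 + a_2^2 + a_3^2 = p$ with $a_0 \geq 1$ odd and $a_1, a_2, a_3$ even; to each I associate the integral quaternion $\alpha = a_0 + a_1 i + a_2 j + a_3 k$, of norm $p$. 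Because $q \equiv 1 \pmod 4$ is odd, $-1$ is a square modulo $q$, so Hamilton's quaternion algebra splits over $\mathbb{Z}/q\mathbb{Z}$ and $\alpha$ determines a matrix in $GL_2(\mathbb{Z}/q\mathbb{Z})$ of determinant $p$; since $p$ is a quadratic residue modulo $q$, dividing by a square root of $p$ moves it into $SL_2$, and we project to $G$. Conjugate quaternions $\alpha, \bar\alpha$ (obtained by negating $a_1, a_2, a_3$) map to mutually inverse elements, so the image $S \subseteq G$ is a symmetric set; the hypothesis $q > 2\sqrt{p}$ (which gives $|a_t| < q$) guarantees that these $p+1$ elements are distinct and non-trivial, so $|S| = p+1$ and $F := \mathrm{Cay}(G, S)$ is a genuine $(p+1)$-regular simple graph on $|G| = q(q^2-1)/2 = m$ vertices; it is connected since $S$ generates $G$.

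Non-bipartiteness is immediate: a Cayley graph on a symmetric generating set is bipartite exactly when there is a homomorphism $G \to \mathbb{Z}/2$ sending all of $S$ to $1$, but $q > 2\sqrt{p} \geq 2\sqrt{5}$ forces the prime $q$ to be at least $5$, so $G$ is simple and nonabelian and admits no homomorphism onto $\mathbb{Z}/2$ at all. For the girth I would run the standard lifting argument of Lubotzky--Phillips--Sarnak: a cycle of length $\ell$ through the identity is a relation $s_1 \cdots s_\ell = 1$ in $G$ with $s_{t+1} \neq s_t^{-1}$ for all $t$; lifting each $s_t$ to the corresponding quaternion $\alpha_t$, unique factorisation in the Hurwitz quaternions together with the non-backtracking condition forces $\beta := \alpha_1 \cdots \alpha_\ell = b_0 + b_1 i + b_2 j + b_3 k$, which has norm $b_0^2 + b_1^2 + b_2^2 + b_3^2 = p^\ell$, to satisfy $(b_1, b_2, b_3) \neq (0,0,0)$, while the relation in $G$ forces $q$ to divide each of $b_1, b_2, b_3$. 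Hence $q^2 \leq b_1^2 + b_2^2 + b_3^2 \leq p^\ell$, so $\ell \geq 2\log_p q$. Since $m = q(q^2-1)/2 < q^3$ gives $\log_p m < 3\log_p q$, the girth of $F$ is at least $2\log_p q > \tfrac{2}{3}\log_p m \geq \tfrac{1}{2}\log_p m$, as required.

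The main obstacle is the Ramanujan property itself: every eigenvalue $\lambda$ of the adjacency matrix of $F$ other than the trivial $\pm(p+1)$ satisfies $|\lambda| \leq 2\sqrt{p} = 2\sqrt{d-1}$, and I would not try to prove this from scratch. The key fact is that the adjacency operator of $F$ is essentially the Hecke operator $T_p$ acting on a finite-dimensional space of automorphic forms for the multiplicative group of the definite rational quaternion algebra ramified only at $2$ and $\infty$ (with a level divisible by $q$); the space is finite because the algebra is definite, which is exactly why $F$ is a finite graph. The Jacquet--Langlands correspondence matches the non-trivial eigenvalues of this $T_p$ with the Hecke eigenvalues at $p$ of classical holomorphic cusp forms of weight $2$, for which $|\lambda| \leq 2\sqrt{p}$ is the Ramanujan--Petersson bound; in weight $2$ this is Eichler's theorem, a consequence of the Weil estimate for the number of points of the associated elliptic curve over $\mathbb{F}_p$ (in general it is Deligne's theorem), while the one-dimensional automorphic representations account for exactly the two excluded eigenvalues $\pm(p+1)$. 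I would invoke this chain --- Jacobi's formula, Jacquet--Langlands, Eichler/Deligne --- as a black box; it is this deep arithmetic input that makes the statement a theorem of Lubotzky, Phillips and Sarnak rather than something provable by elementary means here. Explicitness holds throughout: $S$ is produced by enumerating the $O(p^2)$ candidate quadruples and performing linear algebra over $\mathbb{Z}/q\mathbb{Z}$.

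Finally I would note that the later sections of the paper do not need the sharp constant $2\sqrt{d-1}$: any explicit family of constant-degree graphs with second-largest eigenvalue bounded away from the degree and with girth $\Omega(\log m)$ would suffice, so a reader preferring to minimise the arithmetic input may replace \emph{Ramanujan} by \emph{good expander} throughout, at essentially no cost to the rest of the construction.
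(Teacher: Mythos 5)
The paper offers no proof of this statement: it is presented as a simplified form of the Lubotzky--Phillips--Sarnak theorem and cited directly to \cite{lubotzky1988ramanujan}. Your sketch is a correct reconstruction of the standard LPS argument---the quaternion Cayley graph on $PSL_2(\mathbb{Z}/q\mathbb{Z})$, girth via unique factorisation of Hurwitz quaternions, and the Ramanujan eigenvalue bound via Jacquet--Langlands plus Eichler/Deligne---and the girth bound you derive, $\tfrac{2}{3}\log_p m$, is in fact stronger than the $\tfrac{1}{2}\log_p m$ the paper asserts, so the proposal is entirely consistent with the source being invoked.
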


 Next, we discuss expansion properties of graphs given by Theorem \ref{thm:Ramanujan}.

\subsubsection{Expansion properties of Ramanujan graphs}

Given a graph $F$ with $\ell$ vertices and a collection of subsets $S_0, \ldots, S_{q-1}$, for some $q \in \mathbb{N}$, we say that a vertex $v \in V(F)$ is \emph{$q$-expanding} with respect to the sets $S_i$ if the following holds: for any path $P$ in $F$ of size at most $q$ (i.e. $P$ has at most $q$ vertices), there are at least $\ell/2$ vertices $w_{q - 1} \in V(F)$ such that there exists a path $(v, w_0, . . . , w_{q - 1})$ in $F$ with $w_i \notin S_i \cup P$ for every $0 \le i \le q - 1$. The following lemma shows that if $S_i$ is not too large, there are many $q$-expanding vertices.

\begin{lemma}[Lemma 4.5, \cite{alon2008optimal}] \label{lemma:expand}
Suppose $d > 720$ and let $F$ be a non-bipartite $d$-regular Ramanujan graph with $\ell$ vertices and girth at least $\frac{1}{2} \log_{d-1} \ell$. Then, for $q = \lceil \log \ell / \log 10 \rceil$ and every collection of subsets $S_0, \ldots, S_{q-1} \subseteq V(F)$ of size $|S_i| \le \ell/20$ for $0 \le i \le q - 1$, there are at least $2\ell/3$ $q$-expanding vertices.
\end{lemma}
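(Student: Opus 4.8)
The plan is to deduce the lemma from a couple of consequences of the Ramanujan bound $\lambda(F)\le 2\sqrt{d-1}$, together with a double‑counting argument that controls the vertices which fail to expand.

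\smallskip
\noindent\emph{Tools.} First I would record what $d>720$ and the girth hypothesis buy us. Since $2\sqrt{d-1}$ is tiny next to $d$, the expander mixing lemma (in Tanner's form $|N(U)|\ge d^{2}|U|/(\lambda^{2}+(d^{2}-\lambda^{2})|U|/\ell)$) gives very strong vertex expansion: every $U\subseteq V(F)$ with $|U|\le \ell/10$ satisfies $|N(U)|\ge 9|U|$, in fact $|N(U)|\ge 60|U|$ once $|U|\le \ell/100$, while every $U$ with $|U|\ge \ell/10$ satisfies $|N(U)|\ge 0.9\ell$. Combined with $q=\lceil\log_{10}\ell\rceil$ and $2\sqrt{d-1}/d<1/10$, the identity $(A^{t})_{uw}=d^{t}/\ell\pm(2\sqrt{d-1})^{t}$ (with $A$ the adjacency matrix) shows that walks of length $\approx q$ mix: any two vertices are joined by $(1\pm o(1))d^{t}/\ell$ walks of length $t$ as soon as $t$ is a sufficiently large constant fraction of $q$. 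Finally, girth $g\ge\tfrac12\log_{d-1}\ell$ is used in the form: a path on at most $q$ vertices contains at most $1+q/(g-2)=O(1)$ neighbours of a fixed vertex (two neighbours at distance $t$ along the path would close a cycle of length $\le t+2$), so it can never cover more than a bounded number of the $d\ge 721$ neighbours of any vertex.

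\smallskip
\noindent\emph{Reduction to a fast exploration.} Fix $S_0,\dots,S_{q-1}$ with $|S_i|\le\ell/20$, and for the moment also a path $P$ with $|P|\le q$. For a vertex $v$ run the breadth‑first exploration $A_{-1}:=\{v\}$, $A_i:=N(A_{i-1})\setminus(S_i\cup P\cup\{\text{vertices used so far}\})$, so that each vertex of $A_i$ is the endpoint of a valid path $(v,w_0,\dots,w_i)$ with $w_j\notin S_j\cup P$; it then suffices to force $|A_{q-1}|\ge\ell/2$. The expansion bounds yield a \emph{robust absorption} phenomenon: if some $A_i$ with $i\le q-3$ has size $\ge\ell/100$, then $|N(A_i)|\ge 0.6\ell$, so even after deleting the $\le\ell/20+O(q)$ forbidden vertices the exploration jumps above $\ell/2$ and remains above $\ell/2$ at every later level, whence $|A_{q-1}|\ge\ell/2$. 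Moreover, while $|A_i|<\ell/100$ each step either has $S_i$ capturing most of $N(A_{i-1})$ (a \emph{stalled} step) or multiplies $|A_i|$ by at least $30$; since $30$‑fold growth reaches $\ell/100$ within $\log_{30}\ell=(1+o(1))(\ln10/\ln30)\,q<q-3$ steps, a vertex whose exploration never stalls is $q$‑expanding. So every $v$ is $q$‑expanding unless, for some $P$, its exploration stalls at several of the first $O(q)$ levels.

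\smallskip
\noindent\emph{Counting the stalling vertices — the main obstacle.} At a level $i$ at which $v$ stalls we have $|A_{i-1}|<\ell/100$ yet $S_i$ (together with $P$ and the $O(q)$ already used vertices) contains almost all of $N(A_{i-1})$; since $A_{i-1}$ is small, $|N(A_{i-1})|$ is a definite multiple of $|A_{i-1}|$, so the \emph{fixed} set $S_i$ of size $\le\ell/20$ must swallow almost the entire $i$‑th neighbourhood of $v$. A double‑counting bound — at most $d|S_i|/\theta$ vertices can have $\theta$ neighbours inside a fixed set of size $|S_i|$ — applied at each of the few early levels, with the girth fact used to absorb the $\le q$ extra vertices contributed by $P$, bounds the set of all $v$ that stall (for any $P$) by $\ell/3$, and hence produces at least $2\ell/3$ $q$‑expanding vertices. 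I expect this accounting to be the hard part: in the early regime the exploration is tiny while the $S_i$ are comparatively huge, so expansion is useless there and the only resource is that each $S_i$ is \emph{fixed in advance} and hence "aligned with" the local structure of only a few vertices $v$; pushing the bound down to exactly $\ell/3$ with the stated constants ($|S_i|\le\ell/20$, $d>720$, target $2\ell/3$) and doing so uniformly over all paths $P$ at once is delicate, and the argument is genuinely tight — this is precisely where $d>720$ (a large enough expansion factor that the exploration outruns the $S_i$ while still leaving enough steps for the walks to mix) is needed.

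\smallskip
\noindent\emph{Path‑ness and the extra set $P$.} Both were folded into the above: requiring that the exploring walk be a genuine path amounts to forbidding at step $i$ the $\le q$ vertices already placed, which enlarges the forbidden set by only $O(q)$ — negligible against the neighbourhoods once $|A_i|$ is a constant fraction of $V$ (so robust absorption is unaffected), and part of the early‑level count otherwise; and $P$, of size $\le q$, is negligible at the late levels and is absorbed at the early levels by the girth fact, since a short path covers $O(1)$ neighbours of any vertex and so by itself can stall the expansion for only a negligible set of $v$. (In the "large" regime one also uses the mixing of walks of length $q-1-i$ started from the constant‑fraction set $A_i$, together with the spread‑out bound $(A^{t})_{uw}\le 2d^{t}/\ell$, to see that the $\ge\ell/2$ reached vertices can indeed be reached by length‑$q$ paths.) Putting these pieces together gives the lemma.
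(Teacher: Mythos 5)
The paper does not prove this lemma; it imports it verbatim as ``Lemma 4.5'' from Alon and Capalbo \cite{alon2008optimal}, so there is no in-paper proof for your attempt to be measured against. That said, your sketch should be assessed on its own, and while the high-level shape (a breadth-first exploration whose frontier either ``stalls'' against $S_i\cup P$ or grows geometrically until the expander-mixing lemma takes over, with the girth hypothesis controlling how badly a short path $P$ can interfere) is the natural one for such statements, the argument as written has a genuine gap precisely where you flag it.

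The gap is the counting of stalling vertices. The bound you invoke --- at most $d|S_i|/\theta$ vertices have $\theta$ neighbours inside $S_i$ --- controls stalls at level $1$ only, where the blocked set is the direct neighbourhood of $v$. For a stall at level $i>1$ the object being swallowed by $S_i$ is $N(A_{i-1}(v,P))$, and $A_{i-1}(v,P)$ is not a function of $v$ alone: it depends on the adversarial path $P$ (which is chosen \emph{after} $v$ and may differ from vertex to vertex) and on the interaction with $S_0,\dots,S_{i-1}$ at all earlier levels. Because the exploration is adaptive, a direct edge-count against $S_i$ no longer yields a clean ``at most $c|S_i|$ vertices stall at level $i$'' bound; one must argue uniformly over all paths $P$ and over all histories, and the statement ``the set of all $v$ that stall (for any $P$) is at most $\ell/3$'' is asserted but not derived. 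A related soft spot: for a bad vertex, stalling at a single level is not enough --- with a $30$-fold growth factor you need roughly $0.3q$ stalls before the growth budget is exhausted, and turning that multiplicity into the $\ell/3$ bound again requires a level-by-level count you have not carried out. Finally, the constants ($d>720$, $|S_i|\le\ell/20$, growth factor $30$, target $2\ell/3$) are stated as ``genuinely tight'' but never checked against the girth bound $g\ge\tfrac12\log_{d-1}\ell$ (note $g$ is a constant fraction of $q$, not larger, so the tree regime does not extend through all $q$ levels); until those numbers are verified the argument remains a plausible plan rather than a proof.
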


By repeated application of Lemma \ref{lemma:expand}, we show that such graph $F$ contains arbitrarily long walk which is well distributed, certain pairs of vertices are not close,  and `locally' forms a path. The following statement makes this precise.

\begin{lemma} \label{lemma:f_walk}
	Let $F$ be as stated in Lemma \ref{lemma:expand}, and set $q = \lceil \log \ell / \log 10 \rceil$. Then, for every $n \in \mathbb{N}$ and a function $\sigma \colon [n-1] \rightarrow \mathcal{P}([n-1])$ (recall that $[n-1] = \{0, \ldots, n - 1\}$) such that
	\begin{itemize}
		\item $\sigma(t) \subseteq \{0, \ldots, (\lfloor t / q \rfloor - 1) \cdot q - 1\}$, and
		\item $|\sigma(t)| \le \ell / (160 d^4)$,
	\end{itemize}
	for all $0 \le t \le n - 1$, there exists a mapping $f \colon [n-1] \rightarrow V(F)$ with the following properties: 
	\begin{enumerate}[(F1)]
		\item $|f^{-1}(v)| \le 40 \lceil n / \ell \rceil$ for every $v \in V(F)$, \label{pr:F1}
		\item $f(t) \neq f(t')$ and the distance between $f(t)$ and $f(t')$ is at least $5$, for every $t \in [n-1]$ and $t' \in \sigma(t)$, and \label{pr:F2}
		\item the sequence 
		$$
			(f(k q), f(k q + 1), \ldots, f(kq + \hat q - 1))			
		$$
		forms a path in $F$ for every $0 \le k < n / q$ and $\hat q = \min \{ 2q, n - kq\}$. \label{pr:F3}
	\end{enumerate}
\end{lemma}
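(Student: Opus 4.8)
\medskip
\noindent
\emph{Proof plan.} The plan is to build $f$ greedily, one block of $q$ consecutive indices at a time: write $B_k := \{kq, \ldots, kq+q-1\}$ for the $k$-th block (the last one truncated if $q \nmid n$) and process $k = 0, 1, 2, \ldots$ in order. To get (F1) I maintain counts and call a vertex \emph{full} once $f$ has been set equal to it on at least $\tau := 40\lceil n/\ell\rceil$ indices; at most $n$ indices are ever assigned, so there are never more than $n/\tau \le \ell/40$ full vertices, and since $f$ will be injective on each block no fibre grows by more than one per block, so once a vertex is full it receives no further index and (F1) holds. To prepare for (F2), observe that $d$-regularity gives $|\{w : \dist(w,x) \le 4\}| \le d^4$ for every $x$, so for each $t$ the set $N(t) := \bigcup_{t' \in \sigma(t)}\{w : \dist(w,f(t')) \le 4\}$ that (F2) forces us to avoid at $t$ has $|N(t)| \le |\sigma(t)|\, d^4 \le \ell/160$; and since $\sigma(t) \subseteq B_0 \cup \cdots \cup B_{\lfloor t/q\rfloor - 2}$, this $N(t)$ is already known once the blocks before $B_{\lfloor t/q\rfloor - 1}$ are placed.

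I would then run the following induction. For $k \ge 1$ and $0 \le j < q$ set $S^{(k)}_j := (\text{vertices full just before } B_{k-1} \text{ is placed}) \cup N(kq+j)$; then $|S^{(k)}_j| \le \ell/40 + \ell/160 < \ell/20$, so Lemma~\ref{lemma:expand} gives at least $2\ell/3$ vertices that are $q$-expanding with respect to $(S^{(k)}_j)_j$. The invariant is: after $B_{k-1}$ is placed, $f$ is defined and satisfies (F2) on $B_0 \cup \cdots \cup B_{k-1}$, every window of (F3) contained in this set is a path, no fibre exceeds $\tau$, and the final vertex $v_{k-1} := f(kq-1)$ of $B_{k-1}$ is $q$-expanding with respect to $(S^{(k)}_j)_j$. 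To place $B_k$, apply this expansion property of $v_{k-1}$ with excluded path $P := f(B_{k-1})$ (which has $q$ vertices): it yields at least $\ell/2$ candidates $w_{q-1}$ admitting a path $(v_{k-1}, w_0, \ldots, w_{q-1})$ with $w_i \notin S^{(k)}_i \cup P$ for all $i$. For any such candidate, setting $f(kq+i) := w_i$ makes $f(B_{k-1}) \cup f(B_k)$ a path (the two pieces meet only at $v_{k-1}$, and $w_i \notin f(B_{k-1})$ forbids repeats), satisfies (F2) on $B_k$ (as $w_i \notin N(kq+i)$), and keeps every fibre $\le \tau$: even though $S^{(k)}_i$ only excludes vertices full \emph{before} $B_{k-1}$, any vertex that turns full \emph{during} $B_{k-1}$ lies in $P = f(B_{k-1})$, which is excluded too. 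Finally, at most $\ell/3$ of the $\ge \ell/2$ candidates fail to be $q$-expanding with respect to $(S^{(k+1)}_j)_j$ (already determined, since it involves only the current full set and $N((k+1)q+j)$, which depends on $B_0, \ldots, B_{k-1}$), so at least $\ell/6 > 0$ candidates re-establish the last clause of the invariant; I pick one as $v_k$ and continue.

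For the base case I would pick, via Lemma~\ref{lemma:expand}, a vertex $z$ that is $q$-expanding with respect to $(S^{(1)}_j)_j$ (in fact $S^{(1)}_j = \emptyset$ here, the full set and $\sigma$ being empty at this stage), apply its expansion property with $P = \emptyset$ to obtain a path $(z, w_0, \ldots, w_{q-1})$, and let $B_0$ be the $q$-vertex sub-path $(w_{q-2}, \ldots, w_0, z)$, so that $f(q-1) = z$ is expanding as needed while (F2) is vacuous on $B_0$. A truncated final block, if present, is just the appropriate prefix of the path the expansion step supplies (a prefix of a path is a path, and its excluded set still contains the full vertices), and needs no expanding endpoint. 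It then remains to check that each window named in (F3) lies in a union of two consecutive blocks, hence is covered by the maintained path property, and to note the inequality $\tfrac{\ell}{2} + \tfrac{2\ell}{3} > \ell$ used above; both are routine.

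The part I expect to be most delicate is precisely this interlocking bookkeeping: the expansion property of each block's endpoint has to be guaranteed against forbidden sets that refer to the full-vertex set \emph{as it will be one block later}, which is not yet known when the endpoint is chosen. The device that makes it work is that the only discrepancy --- vertices that become full while the current block is written --- is contained in the current block, which is exactly the path $P$ excluded by the next use of the expansion property; combined with the hypothesis $|\sigma(t)| \le \ell/(160 d^4)$, which keeps every $N(t)$ safely under the $\ell/20$ budget of Lemma~\ref{lemma:expand}, this lets the single ``$\ge \ell/2$ good versus $\le \ell/3$ bad'' count carry the induction through.
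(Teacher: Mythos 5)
Your proposal is correct and follows essentially the same greedy block-by-block construction as the paper, including the key device of maintaining that each block's final vertex is $q$-expanding with respect to the \emph{next} block's forbidden sets, and handling the one-block lag in the full-vertex count by noting that any vertex turning full during block $B_{k-1}$ lies in the excluded path $f(B_{k-1})$. One small arithmetic slip: in a $d$-regular graph the number of vertices within distance $4$ of a given vertex is at most $1 + d + d(d-1) + d(d-1)^2 + d(d-1)^3 \le 4d^4$, not $d^4$, so $|N(t)| \le \ell/40$ rather than $\ell/160$; the resulting bound $|S^{(k)}_j| \le \ell/40 + \ell/40 = \ell/20$ still meets the hypothesis of Lemma~\ref{lemma:expand}, so the argument is unaffected.
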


\noindent
We remark that there is a trade-off between the size of $\sigma(t)$ and the distance in  the property (F2). In particular, by requiring that $\sigma(t)$ is smaller we could achieve larger distance. From the property (F3) we conclude that the sequence
$$
	(f(0), f(1), \ldots, f(n-1))
$$
forms a walk in $F$.

\begin{proof}
For simplicity, let us assume $n$ is divisible by $q$. We inductively define 
$$
	f_k \colon \{0, \ldots, kq - 1\} \rightarrow V(F)
$$
for $1 \le k \le n / q$, such that each $f_k$ satisfies (F1)--(F3), i.e.
\begin{enumerate}[(i)]
	\item $|f^{-1}(v)| \le 40 \lceil n / \ell \rceil$ for every $v \in V(F)$,
	\item $f(t) \neq f(t')$ and the distance between $f(t)$ and $f(t')$ is at least $5$ for every $t \in [kq - 1]$ and $t' \in \sigma(t)$, and 
	\item the sequence 
		$$
			(f(k' q), f(k' q + 1), \ldots, f(k'q + \hat q - 1))			
		$$
		forms a path in $F$ for every $0 \le k' \le k - 1$ and $\hat q = \min \{ 2q, kq - k'q\}$.
\end{enumerate}
Note that then $f := f_{n/q}$ satisfies the properties of the lemma. Moreover, for each $1 \le k < n / q$ we shall further assume that $f_k(kq - 1)$ is $q$-expanding with respect to the sets $S_k(i) = D_k(i) \cup A_k$ ($0 \le i \le q - 1$), where
\begin{align*}
	D_k(i) &= \{v \in V(F) \; : \; \exists t' \in \sigma(kq + i) \; \text{ such that } \; \{f_k(t'), v\} \in F^4 \; \text{ or } \; f_k(t') = v\}, \\
	A_k &= \{v \in V(F) \; : \; | f_{k-1}^{-1}(v) | = 40 \lceil n / \ell \rceil \}.
\end{align*}
Let $f_0$ to be an empty function, and $A_{n/q} = D_{n/q}(i) = \emptyset$ for $0 \le i \le q - 1$.

We start by constructing $f_1$. Let $X_1 \subseteq V(F)$ be the set of $q$-expanding vertices with respect to empty sets (i.e. $S_i = \emptyset$ for $0 \le i \le q - 1$). From Lemma \ref{lemma:expand} we get $|X_1| \ge 2\ell / 3$. Pick an arbitrary vertex $v \in X_1$. By the definition, there are at least $\ell / 2$ vertices $w_{q-1} \in V(F)$ for which there exists a path $(v, w_0, \ldots, w_{q-1})$ in $F$ and from $|X_1 \setminus \{v\}| \ge \ell / 2$ we deduce that at least one such vertex $w_{q - 1}$ belongs to $X_1$. Let $(v, w_0, \ldots, w_{q-1})$ be the corresponding path and set $f_1(i) := w_i$ for $0 \le i \le q - 1$. We now verify that $f_1$ has the desired properties. First, since $(f_1(0), \ldots, f_1(q-1))$ is a path we have $|f_1^{-1}(v)| \le 1$ for every $v \in V(F)$ and, moreover, (iii) is satisfied. Next, observe that $\sigma(i) = \emptyset$ for $i \in [2q - 1]$ and therefore the property (ii) is trivially satisfied. Finally, this also implies that the sets $S_1(i)$ are empty and from $f_1(q-1) \in X_1$ we conclude that $f_1(q-1)$ is $q$-expanding with respect to the sets $S_1(i)$.

Let us assume we have defined such functions $f_1, \ldots, f_k$, for some $1 \le k < n / q$. We aim to construct $f_{k+1}$. Recall that $f_k \colon \{0, \ldots, kq - 1\} \rightarrow V(F)$, and let us set $f_{k+1}(i) = f_k(i)$ for $0 \le i \le k q - 1$. If $k + 1 = n/q$ then $A_{k+1}$ and $D_{k+1}(i)$ are empty sets. Otherwise, for every $0 \le i \le q - 1$ and $t' \in \sigma((k + 1)q + i)$ we have $t' \le kq - 1$. Importantly, this implies that $D_{k+1}(i)$ is well defined at this point, that is, it does not depend on how we define $f_{k+1}(kq), \ldots, f_{k+1}(kq + q - 1)$. Since $F$ is $d$-regular, there are at most $4d^4$ vertices at distance less than 5 from any vertex $v \in V(F)$, including the vertex $v$ itself. In particular, we have 
$$
	|D_{k+1}(i)| \le 4d^4 |\sigma((k+1)q + i)| \le \ell / 40.
$$
Observe also that
$$
	|A_{k+1}| \le \frac{n}{40 \lceil n / \ell \rceil} \le \ell / 40.
$$
Therefore, we can apply Lemma \ref{lemma:expand} with $S_i = S_{k+1}(i)$ and let $X_{k+1} \subseteq V(F)$ be the set of $q$-expanding vertices with respect to these sets. From the property (iii) of $f_k$ we have that
$$
	P_{k-1} = (f_k((k-1)q), \ldots, f_k(kq - 1))
$$
forms a path in $F$. Since $|X_{k+1}| \ge 2\ell / 3$ and $f_k(kq - 1)$ is $q$-expanding with respect to the sets $S_k(i)$, there exists a vertex $w_{q-1} \in X_{k+1}$ and a path 
$$
	P_k = (f_k(kq - 1), w_0, \ldots, w_{q-1})
$$
such that
\begin{equation} \label{eq:w_i_avoid}
	w_i \notin S_k(i) \cup P_{k-1} = D_k(i) \cup A_k \cup P_{k-1}
\end{equation}
for every $0 \le i \le q - 1$. We claim that $f_{k+1}(kq + i) := w_i$ satisfies the required properties.

To verify the property (i), it suffices to only consider a vertex
$$
	v \in \{f_{k+1}(kq), \ldots, f_{k+1}(kq + q - 1)\},
$$ 
since for all other vertices this property is inherited from $f_k$. From \eqref{eq:w_i_avoid} we have $v \notin P_{k-1}$ and therefore
$$
	f_k^{-1}(v) = f_{k-1}^{-1}(v).
$$
Furthermore, from $v \notin A_{k}$ and the assumption that $f_{k-1}$ satisfies the property (i) we infer 
$$
	|f_{k - 1}^{-1}(v)| < 40 \lceil n / \ell \rceil.
$$
Finally, since $P_k$ is a path there exists exactly one $0 \le i \le q - 1$ such that $f_{k+1}(kq + i) = v$. Together with the previous observations, this shows $|f_{k+1}^{-1}| \le 40 \lceil n / \ell \rceil$.

Similarly, to verify the property (ii) it is enough to only consider $f(t)$ and $f(t')$ for $t = kq + i$ and $t' \in \sigma(t)$, for some $0 \le i \le q - 1$. However, this follows immediately from the definition of $D_{k}(i)$ and \eqref{eq:w_i_avoid}. Next, the property (iii) for $k' = k - 1$ follows from \eqref{eq:w_i_avoid} and the fact that $P_{k-1}$ and $P_k$ are paths with a common endpoint. For smaller values of $k'$ the property (iii) is, again, inherited from $f_k$. Finally, $f_{k+1}((k+1)q - 1) = w_{q-1} \in X_{k+1}$ is $q$-expanding with respect to the sets $S_{k+1}(i)$ by the construction. This finishes the proof of the lemma.
\end{proof}

\subsection{Graph-decomposition result} \label{sec:decomposition}

We use the following graph-decomposition result proven in \cite{alon2007sparse}. An \emph{augmentation} of a graph $T$ is any graph obtained from $T$ by choosing an arbitrary subset of vertices $U \subseteq V(T)$ and adding a matching between $U$ and a new set $U'$ of $|U'| = |U|$ vertices. We call a graph \emph{thin} if it has maximum degree at most $3$ and every connected component is either an augmentation of a cycle or a path, or it has at most two vertices of degree $3$.

\begin{theorem}[\cite{alon2007sparse}] \label{thm:decompose}
Let $\Delta \ge 2$ be an integer and let $H$ be a graph with maximum degree at most $\Delta$. Then there exist spanning subgraphs $H_1, \ldots, H_\Delta \subseteq H$ such that each $H_i$ is thin and every edge of $H$ lies in precisely two graphs $H_i$.
\end{theorem}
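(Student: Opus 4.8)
The plan is to reduce to the case that $H$ is $\Delta$-regular and then split on the parity of $\Delta$: the even case follows from Petersen's theorem, and essentially all the work is in the odd case. For the reduction, recall that every graph of maximum degree at most $\Delta$ is an induced subgraph of some $\Delta$-regular graph $\widehat H$ (take enough disjoint copies of $H$ and repeatedly join vertices of degree below $\Delta$ in different copies), and that every subgraph of a thin graph is thin: deleting edges or vertices never raises a degree above $3$, a sub-cycle or sub-path of an augmented cycle/path is an augmented path, and the ``at most two degree-$3$ vertices per component'' condition is preserved under passing to subgraphs. Since $H = \widehat H[V(H)]$ is induced, a decomposition $\widehat H_1,\dots,\widehat H_\Delta$ of $\widehat H$ into thin graphs covering each edge exactly twice restricts to such a decomposition $H_i := \widehat H_i[V(H)]$ of $H$; so it suffices to treat $\widehat H$.

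If $\Delta = 2m$, Petersen's theorem decomposes $H$ into $m$ edge-disjoint $2$-factors $F_1,\dots,F_m$, each a disjoint union of cycles and hence thin; putting $H_{2j-1} := H_{2j} := F_j$ gives $\Delta$ thin graphs with every edge in exactly one $F_j$, hence in exactly two of the $H_i$. This manifestly breaks for odd $\Delta$: doubling a partition always produces an even number of classes, whereas we need $2k+1$, so for odd $\Delta$ one must genuinely use the extra room in the definition of thin (pendant edges, augmentations, a bounded number of degree-$3$ vertices).

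For $\Delta = 2k+1$, I would first decompose $E(H)$ into spanning subgraphs $N_1,\dots,N_{k+1}$ of maximum degree at most $2$, with the extra feature that at every vertex exactly one $N_i$ has degree $1$ and the others degree $2$ (this last point is forced: $\sum_i \deg_{N_i}(v) = 2k+1$ and each $\deg_{N_i}(v)\le 2$). Such a decomposition exists: orient $H$ so every vertex has out-degree $k$ or $k+1$ (add an auxiliary vertex joined to all of $V(H)$, which makes all degrees even, take an Eulerian orientation, and restrict to $H$), form the bipartite split graph on $V^+ \sqcup V^-$ with an edge $u^+w^-$ for each arc $u\to w$ (maximum degree $k+1$), properly $(k+1)$-edge-colour it by König's theorem, and pull the colour classes back to $H$; each such class is a disjoint union of directed paths and cycles, hence a thin graph of maximum degree $\le 2$. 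Now set $H_{2j-1} := H_{2j} := N_j$ for $j \le k$, which covers every edge of $N_1\cup\dots\cup N_k$ exactly twice; let $H_{2k+1}$ be a completion of $N_{k+1}$; and route each edge of $N_{k+1}$ into a second class among $H_1,\dots,H_{2k}$, using an orientation of $N_{k+1}$ with out-degree $\le 1$ to send each edge ``at its tail'' into the copy of the class where that tail vertex is currently deficient, so that the routed edges become pendant edges turning the affected classes into augmentations of their cycles and paths.

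The routing step is where I expect the difficulty to lie: one must keep every class thin (so its degree-$3$ vertices must organise into an augmentation, or number at most two per component) while keeping every edge in exactly two classes, which calls for careful bookkeeping at the vertices deficient in $N_{k+1}$ itself (the path-endpoints of $N_{k+1}$, to be absorbed inside $H_{2k+1}$) and at vertices that receive a routed edge both as a tail and as a head. An equivalent route to the same obstacle is to apply Petersen's theorem to the doubled multigraph $2H$ to get $2k+1$ spanning $2$-regular submultigraphs, and then choose the Eulerian orientation and König colouring so as to avoid ``digons'' (both copies of an edge landing in one part), repairing the few unavoidable ones by local recolourings. Either way, exploiting the slack built into the definition of thin to absorb the parity mismatch between $\Delta$ and $2\lceil\Delta/2\rceil$ is the real content; the reduction and the even case are routine.
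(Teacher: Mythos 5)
The paper does not reprove this theorem; it cites it and only remarks (in Section~\ref{sec:algo}) that the proof in \cite{alon2007sparse} rests on the Gallai--Edmonds structure theorem and Hall's theorem. Your reduction to the $\Delta$-regular case, the even case via Petersen, and the decomposition of a $(2k+1)$-regular $H$ into $N_1,\dots,N_{k+1}$ of maximum degree at most $2$ (via an Eulerian orientation of $H$ plus a dummy vertex and a K\H{o}nig edge-colouring of the bipartite split) are all sound, and are indeed the routine part. But the proposal stops exactly at the real content: the ``routing'' step for odd $\Delta$, which you yourself flag as the difficulty, is a plan, not a proof. The obstruction is concrete. Each $N_{k+1}$-path has two endpoints, and both of them have degree $1$ in $N_{k+1}$ and hence degree $2$ in every $N_j$ with $j\le k$. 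Orienting the path makes one endpoint a sink, but the other endpoint is a source whose out-arc has no class in which its tail is deficient; ``absorbing it in $H_{2k+1}$'' does not give that edge a second class. Sending it anyway into some $H_{2j-1}$ creates a degree-$3$ vertex at its tail (and possibly at its head), and as routed edges accumulate nothing yet bounds the number of degree-$3$ vertices per component of $H_{2j-1}$ or certifies that each component is an augmentation of a cycle or path. The alternative you sketch --- Petersen on the doubled multigraph $2H$ plus repair of digons --- hits the same wall: across any bridge of $H$ the two parallel copies are forced into a common $2$-factor as a digon, and the ``local recolourings'' that eliminate them while preserving thinness everywhere is precisely what must be exhibited.

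So this is a genuine gap rather than a different complete proof. Your route (Eulerian orientation + K\H{o}nig) does produce a max-degree-$\le 2$ decomposition, but it gives no control over where the degree-$1$ vertices of the $N_i$ land; the cited proof's use of Gallai--Edmonds and Hall is there precisely to obtain such control, which is what makes the absorption of the odd ``half-factor'' into the other classes go through. The overall architecture (reduce to regular, split the degree budget into near-$2$-factors, spend the slack in the definition of thin on the parity mismatch) is in the right spirit, but the decisive step is not carried out.
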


In the case where $\Delta$ is odd, Theorem \ref{thm:decompose} can be seen as a generalization of Petersen's theorem. It was observed in \cite{alon2007sparse} that every thin graph with at most $n$ vertices is a subgraph of $P^4_n$, the $4$-th power of a path with $n$ vertices.

\section{The construction} \label{sec:construction}

For the rest of the paper, let
$$
	d = 734, \quad z = 160 d^5 \quad \text{ and } \quad m = m(n) = 5 \cdot 160 \Delta d^8 \sqrt{n}.
$$
Note that $d$ is chosen such that we can apply Theorem \ref{thm:Ramanujan} with $p = d - 1$ and Lemma \ref{lemma:f_walk}. Our construction relies on the existence of high-girth $d$-regular Ramanujan graphs $R_z$ and $R_m$, with $|V(R_z)| \ge z$ being a constant and $m \le |V(R_m)| \le 32 m$. In particular, $R_z$ is obtained by applying Theorem \ref{thm:Ramanujan} with the smallest $q > 2z^{1/3}$ such that $p = d- 1$ and $q$ satisfy the required conditions. Note that the size of $R_z$ does not depend on $\Delta$, and as such is used for every family $\calH(n, \Delta)$. On the other hand, it follows from the distribution of primes in arithmetic progressions that there exists a prime $q \in (2m^{1/3}, 4m^{1/3})$ such that $q$ is congruent to $1$ modulo $4(d - 1)$, provided $m = m(n)$ is sufficiently large. It is easy to see that $p = d - 1$ is a quadratic residue modulo such $q$ and both numbers are congruent to $1$ modulo $4$. Therefore by Theorem \ref{thm:Ramanujan} there exists an explicit construction of a desired graph $R_m$ with at most $32m$ vertices.

Note that the degree of every vertex in $R_m^4$, the $4$-th power of $R_m$, is at most $4d^4$. Let $\rho_v \colon N_{R_m^4}(v) \rightarrow [4d^4]$ be an arbitrary ordering of neighbours of $v$ in $R_m^4$, for every $v \in V(R_m^4)$. We define $\Gamma = \Gamma(\Delta, n)$ to be the graph on the vertex set 
$$
	V(\Gamma) = V(R_m) \times \left( V(R_m) \times \mathcal{P}([4d^4]) \times V(R_z) \right)^{\Delta-1},
$$
and vertices $(x_1, x_2, X_2, u_2, \ldots, x_\Delta, X_\Delta, u_\Delta)$ and $(y_1, y_2, Y_2, w_2, \ldots, y_\Delta, Y_\Delta, w_\Delta)$ are adjacent iff there exist two indices $1 \le j < i \le \Delta$ such that 
\begin{enumerate}[(E1)]
	\item $\{x_j, y_j\}, \{x_i, y_i\} \in R^4_m$, 
	\item $\rho_{x_i}(y_i) \in X_i$ and $\rho_{y_i}(x_i) \in Y_i$, and
	\item $\{u_i, w_i\} \in R_z^4$. 
\end{enumerate}
We leave the discussion on the choice of parameter $m$ and the structure of $\Gamma$ until the next section. Note that $\Gamma$ has $c_\Delta n^{\Delta/2}$ vertices where $c_\Delta > 0$ depends only on $\Delta$, as required. 


\section{Proof of Theorem \ref{thm:main}} \label{sec:proof}

Consider a graph $H \in \mathcal{H}(n, \Delta)$, for some $\Delta \ge 2$ and $n$ sufficiently large. Using the property of high-girth Ramanujan graphs (Lemma \ref{lemma:f_walk}), we show that there exists an induced embedding of $H$ in $\Gamma = \Gamma(\Delta, n)$. Moreover, we give a deterministic strategy how to find such an embedding.

Let $H_1, \ldots, H_\Delta \subseteq H$ be subgraphs given by Theorem \ref{thm:decompose}. As mentioned in Section \ref{sec:decomposition}, for each $1 \le i \le \Delta$ there exists an embedding $\phi_i \colon H_i \rightarrow P_n^4$ of $H_i$ into the $4$-th power of a path with $n$ vertices. For the rest of the proof we identify $V(P_n^4)$ with $[n-1] = \{0, \ldots, n- 1\}$, in the natural order. Our plan is to  construct homomorphisms $f_i \colon H_i \rightarrow R_m^4$ (for $1 \le i \le \Delta$) and $r_i \colon H_i \rightarrow R_z^4$ (for $2 \le i \le \Delta$) such that the following holds for $2 \le i \le \Delta$:
\begin{enumerate}[(H1)]		
	\item if $f_1(h) = f_1(h')$ then $f_i(h) \neq f_i(h')$, for any distinct $h, h' \in V(H)$,
	\item if $|\phi_i(h) - \phi_i(h')| \le 8$ then  $f_i(h) \neq f_i(h')$, for any distinct $h, h' \in V(H)$,
	\item for each $h \in V(H)$, the set
	\begin{align*}		
		B_i(h) = \{ h' \in V(H) \; : \; & \{h, h'\} \notin H, \; |\phi_i(h) - \phi_i(h')| > 4 \; \text{ and } \\
		&\exists j < i \; \text{ such that } \; \{f_j(h), f_j(h')\}, \{f_i(h), f_i(h')\} \in R^4_m \}
	\end{align*}
	is of size at most $d$ and $|\phi_i(h) - \phi_i(h')| > 2z$ for every $h' \in B_i(h)$,
	\item $\{r_i(h), r_i(h')\} \notin R_z^4$ for every $h' \in B_i(h)$.	
\end{enumerate}
Having such homomorphisms, we define $\gamma \colon H \rightarrow \Gamma$ as
$$
	\gamma(h) = \left( f_1(h), f_2(h), \Phi_2(h), r_2(h), \ldots, f_\Delta(h), \Phi_\Delta(h), r_\Delta(h) \right),
$$
where $\Phi_i(h) = \rho_{f_i(h)}(f_i(N_{H_i}(h)))$ is the set of images of neighbours of $h$ in $H_i$ (or, more precisely, the labels associated with these vertices from the point of view of $f_i(h)$). 

Before we prove that $\gamma$ is an induced embedding of $H$ and that such homomorphisms $f_i$ and $r_i$ exist, we briefly spell out properties (H1)--(H4) and discuss roles of different components of $\Gamma$. We say that an edge of $\Gamma$ is \emph{undesired} if it violates the property that $\gamma$ is induced. For brevity, by the \emph{distance} between two vertices of $H$ in $P_n$ we mean the distance of their images in $P_n$ under some $\phi_i(\cdot)$. First, components of $\Gamma$ associated with $R_m^4$ form the `backbone' of our embedding: the property (H1) ensure that $\gamma$ is injective and the property (H3) further restricts potential undesired edges to be spanned by images of those vertices from $H$ which are either very close or very far apart in $P_n$. This already gives some control over the undesired edges, compared to the construction of Alon and Capalbo \cite{alon2008optimal}, and is achieved by increasing the size of $R_m$ significantly (in particular, from $n^{1/\Delta}$ in \cite{alon2008optimal} to $\sqrt{n}$ here). We then take care of the undesired edges between vertices which are close in $P_n$ using components associated with $\mathcal{P}([4d^4])$: from the property (H2) we have that every $f_i$ is `locally' injective (i.e. no two vertices which are close in $P_n$ are mapped to the same vertex), which together with the choice of $\Phi_i(\cdot)$ and the condition (E2) excludes the possibility that there exists an undesired edge between images of such vertices. Finally, the undesired edges between images of vertices which are far apart in $P_n$ are taken care of by mapping them to non-adjacent vertices in $R_z^4$ (property (H4)). Importantly, the property (H3) also guarantees that for every vertex $h \in V(H)$ there are only constantly many vertices $h' \in V(H)$ such that $\{h, h'\}$ is potentially an undesired edge, which allows us to also take $z$ to be a constant. We make this precise in the rest of the proof.

\begin{claim}
$\gamma$ is an induced embedding.
\end{claim}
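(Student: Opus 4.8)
The plan is to verify that $\gamma$ is injective, that it is a homomorphism (i.e. edges of $H$ map to edges of $\Gamma$), and that no non-edge of $H$ maps to an edge of $\Gamma$; the last point is where all the work lies.

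\emph{Injectivity.} Suppose $\gamma(h) = \gamma(h')$ for distinct $h, h' \in V(H)$. Comparing the first coordinate gives $f_1(h) = f_1(h')$, and comparing, say, the second $R_m$-coordinate gives $f_2(h) = f_2(h')$; but then (H1) is violated. Hence $\gamma$ is injective.

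\emph{Edges of $H$ map to edges of $\Gamma$.} Let $\{h, h'\} \in E(H)$. By Theorem \ref{thm:decompose} the edge $\{h, h'\}$ lies in exactly two of the graphs $H_j$, say $H_j$ and $H_i$ with $j < i$; I will check conditions (E1)--(E3) for this pair. Since $f_j, f_i$ are homomorphisms of $H_j, H_i$ into $R_m^4$, we get $\{f_j(h), f_j(h')\}, \{f_i(h), f_i(h')\} \in R_m^4$, which is (E1). For (E2): because $\{h, h'\} \in E(H_i)$, the vertex $f_i(h')$ is one of the images $f_i(N_{H_i}(h))$, so its label $\rho_{f_i(h)}(f_i(h'))$ lies in $\Phi_i(h)$, the $i$-th powerset-coordinate of $\gamma(h)$; here I should note that $f_i$ is locally injective by (H2), so $f_i(h') \neq f_i(h)$ and the label is genuinely defined, and symmetrically $\rho_{f_i(h')}(f_i(h)) \in \Phi_i(h')$. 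Finally (E3) holds trivially since $R_z^4$ is reflexive-free only on distinct vertices — more carefully, $\{r_i(h), r_i(h')\} \in R_z^4$ needs to hold: since $\{h,h'\} \in E(H_i)$, the images $r_i(h), r_i(h')$ are adjacent in $R_z$ (as $r_i$ is a homomorphism), hence adjacent in $R_z^4$. So $\{\gamma(h), \gamma(h')\} \in E(\Gamma)$.

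\emph{Non-edges of $H$ map to non-edges of $\Gamma$ — the main obstacle.} Let $\{h, h'\} \notin E(H)$ with $h \neq h'$, and suppose for contradiction that $\{\gamma(h), \gamma(h')\} \in E(\Gamma)$, witnessed by indices $j < i$ satisfying (E1)--(E3). From (E1), $\{f_j(h), f_j(h')\}$ and $\{f_i(h), f_i(h')\}$ are edges of $R_m^4$. I will split on the distance $|\phi_i(h) - \phi_i(h')|$ in $P_n$. If this distance is at most $4$, then since $\phi_i$ embeds $H_i$ into $P_n^4$, the pair $\{h, h'\}$ is an edge of $P_n^4$; but $\{h,h'\}\notin E(H)$ means $\{h,h'\}\notin E(H_i)$, so $f_i(h')\notin f_i(N_{H_i}(h))$, and combined with local injectivity (H2) (so $f_i(h)\neq f_i(h')$ and the labels are defined) the label $\rho_{f_i(h)}(f_i(h'))$ is \emph{not} in $\Phi_i(h)$, contradicting (E2). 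If the distance is greater than $4$, then $\{f_j(h),f_j(h')\},\{f_i(h),f_i(h')\}\in R_m^4$ together with $\{h,h'\}\notin E(H)$ and $j<i$ place $h'$ in the set $B_i(h)$ from (H3). Then (H4) says $\{r_i(h), r_i(h')\} \notin R_z^4$, contradicting (E3). (One should double-check the case $j=1$ separately: here (E1) only gives $\{f_1(h),f_1(h')\}\in R_m^4$, and $B_i(h)$ is defined with the condition "$\exists j<i$", which includes $j=1$, so the argument goes through verbatim.) This exhausts all cases, so no non-edge maps to an edge, completing the proof that $\gamma$ is an induced embedding.

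The genuinely hard part of the overall argument is of course \emph{constructing} the homomorphisms $f_i$ and $r_i$ with properties (H1)--(H4) — in particular arranging (H3), that $B_i(h)$ is small and $\phi_i$-spread-out, which is what Lemma \ref{lemma:f_walk} is designed to deliver — but that is deferred to the subsequent part of the proof; the claim itself is the routine bookkeeping that these four properties are exactly what is needed.
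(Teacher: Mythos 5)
The approach mirrors the paper's own proof: verify (E1)--(E3) for the two graphs $H_j,H_i$ containing a given edge, use (H1) for injectivity, and for non-edges split on whether $|\phi_i(h)-\phi_i(h')|$ is at most $4$ (handled by (H2) and the $\Phi_i$ coordinate) or greater than $4$ (handled by (H3), (H4) and the $R_z$ coordinate). Injectivity, the homomorphism check, and the far-apart case are all correct.

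There is, however, a gap in the close-together case. You write that ``$\{h,h'\}\notin E(H_i)$, so $f_i(h')\notin f_i(N_{H_i}(h))$,'' but this inference does not hold as stated: $f_i$ is merely a homomorphism, not injective, so a non-neighbour $h'$ could still have $f_i(h')=f_i(u)$ for some actual neighbour $u\in N_{H_i}(h)$, in which case $\rho_{f_i(h)}(f_i(h'))$ \emph{would} lie in $\Phi_i(h)$ and the contradiction would evaporate. The property (H2) is what rules this out, and it must be applied to the pairs $(u,h')$ with $u\in N_{H_i}(h)$, not just to $(h,h')$. Concretely: since $u\in N_{H_i}(h)$ and $\phi_i$ embeds $H_i$ into $P_n^4$, we have $|\phi_i(u)-\phi_i(h)|\le 4$, hence $|\phi_i(u)-\phi_i(h')|\le 8$, and (H2) then gives $f_i(u)\neq f_i(h')$ for every such $u$. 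Only after this observation can you conclude $f_i(h')\notin f_i(N_{H_i}(h))$. Your parenthetical invokes (H2) only to ensure $f_i(h)\neq f_i(h')$ and that labels are defined, which is not the load-bearing use of (H2) here. Once this step is supplied, the argument closes correctly and agrees with the paper's.
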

\begin{proof}
Since each edge $\{h, h'\} \in H$ belongs to exactly two graphs $H_j, H_i$ ($j < i$) and $f_j$, $f_i$ and $r_i$ are homomorphisms, conditions (E1) and (E3) from the definition of $\Gamma$ are satisfied for this particular choice of $j,i$. From $\rho_{f_i(h)}(f_i(h')) \in \Phi_i(h)$ (and similarly the other way around) we have that (E2) holds as well. This implies that $\gamma$ is a homomorphism of $H$ into $\Gamma$. From the property (H1) we infer that $\gamma$ is injective, thus it is an embedding.

Let us assume, towards a contradiction, that there exists $h, h' \in V(H)$ such that $\{h, h'\} \notin H$ and $\{\gamma(h), \gamma(h')\} \in \Gamma$. Let $j, i \in [\Delta]$ be witnesses for $\{\gamma(h), \gamma(h')\} \in \Gamma$, for some $j < i$. Suppose $|\phi_i(h) - \phi_i(h')| \le 4$. Since $\phi_i$ is an embedding of $H_i$ into $P_n^4$, for every $u \in N_{H_i}(h)$ we have $|\phi_i(u) - \phi_i(h')| \le 8$. From the property (H2) we then obtain $f_i(h') \neq f_i(u)$ and, more generally, $f_i(h') \notin f_i(N_{H_i}(h))$. Therefore, we have $\rho_{f_i(h)}(f_i(h')) \notin \rho_{f_i(h)}(f_i(N_{H_i}(h))) = \Phi_i(h)$, which contradicts the condition (E2). Suppose now that $|\phi_i(h) - \phi_i(h')| > 4$. However, as $\{f_j(h), f_j(h')\}, \{f_i(h), f_i(h')\} \in R_m^4$ we deduce $h' \in B_i(h)$ and so $\{r_i(h), r_i(h')\} \notin R_z^4$, by the property (H4). In any case, we get a contradiction with the assumption that $j, i$ is a witness for the edge $\{\gamma(h), \gamma(h')\}$. This finishes the proof that $\gamma$ is an induced embedding. 
\end{proof}

It remains to show that we can find such homomorphisms. Consider the ordering $h_0, \ldots, h_{n-1}$ of $V(H_i)$ such that $\phi_i(h_t) = t$ for every $t \in [n-1]$. Our construction of $f_i$ and $r_i$ relies on the following observation: suppose we are given a graph $G$ and a function $f \colon V(H_i) \rightarrow V(G)$ such that $W = (f(h_0), \ldots, f(h_{n-1}))$ is a walk in $G$ and $f(h_t) \neq f(h_{t'})$ for every distinct $t, t' \in [n-1]$ with $|t - t'| \le 4$. Then, since $H_i$ is a subgraph of $P_n^4$, $f$ is also a homomorphism of $H_i$ into $G^4$. In other words, if the sequence $W$ forms a walk in $G$ which is `locally' a path, then $f$ is a homomorphism of $H_i$ into $G^4$.

Let $\ell_m = |V(R_m)|$ and $q_m = \lceil \log \ell_m / \log 10 \rceil$. We say that $f_i$ is \emph{well-distributed} if
$$
	|f_i^{-1}(v)| \le \sqrt{n}, \quad \text{ for every } \; v \in V(R_m).
$$ 
With the previous observation in mind, the existence of a well-distributed homomorphism $f_1$ follows from Lemma \ref{lemma:f_walk} applied with $\sigma(t) = \emptyset$ for every $t \in [n-1]$. In particular, let $f \colon [n-1] \rightarrow V(R_m)$ be the mapping obtained by Lemma \ref{lemma:f_walk} and set $f_1(h_t) := f(t)$. From the property (F1) we get
$$
	|f_1^{-1}(v)| \le 40 \lceil n / \ell_m \rceil \le 50 n / m \le \sqrt{n}, 
$$
for every $v \in V(R_m)$. From the property (F3) we infer that $(f_1(h_0), \ldots, f_1(h_{n-1}))$ forms a desired walk. 

Note that we could achieve the same by replacing $R_m$ corresponding to the first coordinate by the $4$-th power of a cycle on $\sqrt{n}$ vertices, in which case $f_1$ is simply defined by going around such a cycle. As this would slightly complicate the condition (E1) and some of the arguments, and only has negligible effect on the number of vertices, we use $R_m$ as stated.

Next, we inductively construct $f_i$ and $r_i$ for $2 \le i \le \Delta$.
 
\begin{claim} 
	Suppose $2 \le i \le \Delta$ and $f_1, \ldots, f_{i-1}$ are well-distributed. Then there exists a well-distributed homomorphism $f_i \colon H_i \rightarrow R_m^4$ which satisfies properties (H1)--(H3).
\end{claim}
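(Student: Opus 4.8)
The plan is to read $f_i$ off Lemma~\ref{lemma:f_walk}, applied with $F = R_m$ (so $\ell = \ell_m$, $q = q_m$), after encoding (H1)--(H3) into the function $\sigma$. Identify $V(H_i)$ with $[n-1]$ via $\phi_i$, writing $h_t$ for the vertex with $\phi_i(h_t) = t$, and set
$$
\sigma(t) \;=\; \bigl\{\, t' \le (\lfloor t / q_m \rfloor - 1) q_m - 1 \;:\; \text{the distance in } R_m \text{ between } f_j(h_t) \text{ and } f_j(h_{t'}) \text{ is } \le 4 \text{ for some } 1 \le j \le i - 1 \,\bigr\}.
$$
The first hypothesis of Lemma~\ref{lemma:f_walk} holds by construction. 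For the second, fix $j \le i-1$: there are at most $4 d^4$ vertices of $R_m$ within distance $4$ of $f_j(h_t)$, and, $f_j$ being well-distributed, each has at most $\sqrt n$ preimages under $f_j$; summing over $j$ gives $|\sigma(t)| \le 4 \Delta d^4 \sqrt n$, which is at most $\ell_m / (160 d^4)$ since $\ell_m \ge m(n) = 5 \cdot 160\, \Delta d^8 \sqrt n$. Let $f$ be the map from Lemma~\ref{lemma:f_walk} for this $\sigma$, and put $f_i(h_t) := f(t)$.

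For $n$ large, $q_m$ exceeds every constant occurring below, so any two indices at distance at most $2z$ lie together on one of the length-$2q_m$ subsequences of (F3), each of which is a path in $R_m$. Now: (F1) gives $|f_i^{-1}(v)| \le 40 \lceil n / \ell_m \rceil \le \sqrt n$, so $f_i$ is well-distributed; adjacency in $H_i \subseteq P_n^4$ forces index-distance at most $4$, so by (F3) the images are at $R_m$-distance at most $4$ and $f_i$ is a homomorphism into $R_m^4$; (H2) is immediate from (F3) (distinct vertices on a common path); and for (H1), if $f_1(h) = f_1(h')$ then either the two $\phi_i$-indices lie in blocks differing by at most one and (F3) separates them, or they lie at least two blocks apart, whence the smaller index is in $\sigma$ of the larger (with $j=1$) and (F2) makes the images distinct.

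The remaining properties rest on the high girth of $R_m$, which is at least $\tfrac12 \log_{d-1} \ell_m \to \infty$. For (H3)(ii): if $h' \in B_i(h)$ then $\{f_i(h), f_i(h')\} \in R_m^4$ while $|\phi_i(h) - \phi_i(h')| > 4$; were $|\phi_i(h) - \phi_i(h')| \le 2z$, the images would lie on a common (F3)-path at path-distance $|\phi_i(h) - \phi_i(h')|$, and joining this sub-path with a shortest $R_m$-path between the images (of length at most $4$) would create a cycle of length at most $2z + 4$ in $R_m$ --- impossible for $n$ large.

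For (H3)(i): by the choice of $\sigma$, any $h'$ whose $\phi_i$-index is at least two blocks from that of $h$ and which has a witness $j < i$ has $\{f_i(h), f_i(h')\} \notin R_m^4$ by (F2), so $h' \notin B_i(h)$; hence every $h' \in B_i(h)$ has $\phi_i$-index within one block of that of $h$, so $B_i(h)$ lies in an interval of length $3 q_m$ and on a common (F3)-path with $h$. On that path $f_i$ is injective, and for $h', h'' \in B_i(h)$ the images $f_i(h'), f_i(h'')$ are each within distance $4$ of $f_i(h)$, hence within distance $8$ of one another; the girth/cycle argument then forces $|\phi_i(h') - \phi_i(h'')|$ to be at most $8$ or at least (girth of $R_m$)$\,-\,8$. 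So $B_i(h)$ is a union of clusters of diameter at most $8$ (hence at most $9$ vertices each) that are pairwise more than (girth of $R_m$)$\,-\,8$ apart; since they fit in an interval of length $3 q_m$ and $q_m / (\text{girth of } R_m)$ is bounded, there are only boundedly many clusters, and a short computation with the chosen constants (using the slack in $m(n)$) gives $|B_i(h)| \le d$. This last estimate --- combining the $\sigma$-based removal of "far, witnessed" pairs with the girth-driven clustering, and checking that $9$ times the cluster count stays below $d$ for all sufficiently large $n$ --- is the step I expect to be the main obstacle; everything else is routine verification.
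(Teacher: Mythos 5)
Your proof follows essentially the same route as the paper: encode the dangerous indices into $\sigma(t)$, invoke Lemma~\ref{lemma:f_walk} with $F = R_m$, read off (F1)--(F3), and finish with the girth of $R_m$. The paper splits $\sigma(t)$ into $D_1(t)$ (collisions under $f_1$) and $D_3(t)$ (pairs made $R_m^4$-close by some $f_j$ with $j<i$, restricted to non-edges of $H$), while you fold both into the single ``$R_m$-distance $\le 4$ under some $f_j$'' condition; this makes $\sigma(t)$ slightly larger, but the count $|\sigma(t)|\le 4\Delta d^4\sqrt n\le \ell_m/(160d^4)$ still goes through because of the factor $5$ built into $m(n)$. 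The only place you deviate substantively is the final count for (H3)(i): you allow $B_i(h)$ to decompose into clusters of index-diameter at most $8$ (since the shortest-path walk from $f_i(h')$ to $f_i(h'')$ might coincide with the $(F3)$-path when $|\phi_i(h')-\phi_i(h'')|\le 8$), and then bound the number of clusters. This is safe, and the extra factor of $9$ is absorbed comfortably by $d = 734$ ($9\cdot(6\log(d-1)/\log 10 + 1)\approx 160 \ll d$), so your ``main obstacle'' is in fact routine arithmetic depending only on $d$, not on the slack in $m(n)$. You could avoid the clustering entirely, as the paper does: compare the walk $f_i(h')\to f_i(h)\to f_i(h'')$ (length $\le 8$, which necessarily visits $f_i(h)$) against the $(F3)$-subpath (which cannot visit $f_i(h)$ since $t$ lies outside $[t',t'']$ yet in the same $(F3)$-block); these are distinct walks with the same endpoints, so they yield a short cycle unless $|t'-t''|$ is comparable to the girth, giving pairwise separation with no small clusters at all. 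Either way the claim stands.
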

\begin{proof}
	Consider the ordering $h_0, \ldots, h_{n-1}$ of $V(H)$ such that $\phi_i(h_t) = t$ for every $t \in [n-1]$. Similarly as in the case of $f_1$, we aim to deduce the existence of $f_i$ by applying Lemma \ref{lemma:f_walk} with suitably defined $\sigma(\cdot)$. In particular, for each $t \in [n-1]$ we set $\sigma(t) = D_1(t) \cup D_3(t)$, where $D_1(t)$ and $D_3(t)$ are chosen such that having $f_i(h_t) \notin D_1(t) \cup D_3(t)$ maintains the properties (H1) and (H3), respectively. 

	Let
	$$
		t_0(t) := (\lfloor t / q_m \rfloor - 1) q_m - 1,
	$$	
	and for each $t \in [n-1]$ let $D_1(t) \subseteq [n-1]$ be defined as follows,
	$$
		D_1(t) = \{0 \le t' \le t_0(t)\; : \; f_1(h_t) = f_1(h_{t'}) \}.
	$$
	Note that $D_1(t)$ is a subset of indices $t'$ for which we require $f_i(h_t) \neq f_i(h_{t'})$, as otherwise the property (H1) becomes violated. From the assumption $|f_1^{-1}(v)| \le \sqrt{n}$ for every $v \in V(R_m)$, we obtain $|D_1(t)| \le \sqrt{n}$. Next, we define a subset $D_3(t) \subseteq [n]$,
	\begin{align*}
		D_3(t) = \{0 \le t' \le t_0(t) \; : \; &\{h_t, h_{t'} \} \notin H \; \text{ and } \\
		&\exists j < i \; \text{ such that } \{f_j(h_t), f_j(h_{t'})\} \in R_m^4\; \}.
	\end{align*}
	The role of $D_3(t)$ is to restrict $B_i(h_t)$ to vertices $h_{t'}$ with $|t - t'| \le 2q_m$. As we will see shortly, this suffices for the property (H3) to hold. From the assumption $|f_j^{-1}(v)| \le \sqrt{n}$ for $1 \le j < i$ we obtain $|D_3(t)| \le \Delta 4d^4 \sqrt{n}$ as follows: there are at most $\Delta$ choices for $j < i$, at most $4d^4$ choices for a neighbour $v'$ of $f_j(h_t)$ in $R^4_m$, and then at most $\sqrt{n}$ choices for $h_{t'} \in f_{j}^{-1}(v')$. Finally, by the choice of $m \le \ell_m$ we get
	$$
		|\sigma(t)| = |D_1(t) \cup D_3(t)| \le 5 \Delta d^4 \sqrt{n} \le \ell_m / (160 d^4).
	$$
	Therefore, we can apply Lemma \ref{lemma:f_walk} with $\sigma$ (note that the first condition is satisfied by the definition of $D_1$ and $D_3$ and the choice of $t_0(t)$). Let $f$ be the obtained function and set $f_i(h_t) := f(t)$ for $t \in [n-1]$. We claim that $f_i$ satisfies the required properties. 

	First, it follows from the property (F3) that $f_i(h_0), \ldots, f_i(h_{n-1})$ is a walk in $R_m$ with $f_i(h_t) \neq f_i(h_{t'})$ for every distinct $t, t' \in [n-1]$ with $|t - t'| \le 8$ (thus the property (H2) holds). Therefore, $f_i$ is a homomorphism of $H_i$ into $R_m^4$. Moreover, from the property (F1) we obtain that $f_i$ is well balanced.

	Next, consider some $t' < t$ such that $f_1(h_t) = f_1(h_{t'})$. If $t' \le t_0(t)$ then $t' \in \sigma(t)$ and from (F2) we get $f_i(h_t) = f(t) \neq f(t') = f_i(h_{t'})$. Otherwise, from the property (F3) we have that 
	$$
		(f(t_0 + 1), \ldots, f(t'), \ldots, f(t))
	$$
	is a path and thus $f_i(h_t) \neq f_i(h_{t'})$. This proves the property (H1).

	It remains to show that the property (H3) holds. Consider some $t \in [n-1]$. Observe that for any $h_{t'} \in B_i(h_t)$ there exist two distinct walks from $h_{t'}$ to $h_t$ in $R_m$, one of length at most $4$ and the other of length at most $|t - t'|$ (and at least $5$). In particular, this implies that $R_m$ contains a cycle of length at most $|t - t'| + 4$ and from the assumption on the girth of $R_m$ we conclude $|t - t'| > \frac{1}{3} \log_{d-1} \ell_m > 2z$, for sufficiently large $m = m(n)$. This proves the second part of the property (H3). 

	To prove the first part of the property (H3), we first observe that the property (F2) implies $\{f_i(h_t), f_i(h_{t'})\} \notin R_m^4$ for every $t' \in D_3(t)$. In particular, if $h_{t'} \in B_i(h)$ and $t' < t$, then $t' \ge t_0(t) + 1$. On the other hand, if $h_{t'} \in B_i(h_t)$ then also $h_t \in B_i(h_{t'})$, and if $t' > t$ then by the same argument we obtain $t \ge t_0(t')$. This further implies $t' < \lfloor t / q_m \rfloor + 2q_m$. To summarise, for every $h_{t'} \in B_i(h_t)$ we have
	\begin{equation} \label{eq:tp_bound}
		t_0(t) < t' < t_0(t) + 3q_m.
	\end{equation}
	Since $|t' - t| > 4$ for $h_{t'} \in B_i(h_t)$, we infer that for any two vertices $h_{t'}, h_{t''} \in B_i(h_t)$ there exist two distinct walks between $h_{t'}$ and $h_{t''}$ in $R_m$, one of length at most $8$ and the other of length at most $|t' - t''|$. As in the previous case, it is easy to see that this implies $R_m$ contains a cycle of length at most $|t' - t''| + 8$. Therefore, by the assumption on the girth of $R_m$ we conclude $|t' - t''| > \frac{1}{3} \log_{d-1} \ell_m$. Together with \eqref{eq:tp_bound}, this gives the following upper bound on the size of $B_i(h_t)$,
	$$
		|B_i(h_t)| < \frac{3q_m}{ \log_{d-1} \ell_m / 3} < d,
	$$ 
	as required. This finishes the proof of the claim.  
\end{proof}

Finally, assuming we have already defined $f_1, \ldots, f_i$ such that (H1)--(H3) holds, we construct a homomorphism $r_i$.

\begin{claim}
	Suppose $2 \le i \le \Delta$ and $f_1, \ldots, f_i$ satisfy properties (H1)--(H3). Then there exists a homomorphism $r_i \colon H_i \rightarrow R_z^4$ which satisfies the property (H4).
\end{claim}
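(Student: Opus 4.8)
The plan is to run Lemma \ref{lemma:f_walk} a third time, now on $F = R_z$, so that $r_i$ arises as a walk in $R_z$ that is ``locally'' a path, in exactly the same way $f_1$ and $f_i$ were obtained. Write $\ell_z = |V(R_z)|$ and $q_z = \lceil \log \ell_z / \log 10 \rceil$; recall that $\ell_z \ge z = 160 d^5$ and that $\ell_z$ (and hence $q_z$) is an absolute constant, in particular $q_z \ll z$. As before, order $V(H)$ as $h_0, \ldots, h_{n-1}$ with $\phi_i(h_t) = t$. By the observation preceding the construction of $f_1$, it suffices to produce a map $f \colon [n-1] \to V(R_z)$ for which $(f(0), \ldots, f(n-1))$ is a walk in $R_z$ with $f(t) \neq f(t')$ whenever $0 < |t - t'| \le 4$: then $r_i(h_t) := f(t)$ is automatically a homomorphism of $H_i$ into $R_z^4$, since $H_i \subseteq P_n^4$.

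For the forbidden sets I would take
$$
	\sigma(t) = \{\, t' \in [n-1] \;:\; t' < t \text{ and } h_{t'} \in B_i(h_t)\,\}.
$$
The first hypothesis of Lemma \ref{lemma:f_walk} holds: if $h_{t'} \in B_i(h_t)$ then $|t - t'| = |\phi_i(h_t) - \phi_i(h_{t'})| > 2z$ by property (H3), and since $z \gg q_z$ this forces $t' \le (\lfloor t / q_z \rfloor - 1) q_z - 1$ (and when this quantity is negative, i.e.\ $t$ is small, the same inequality just says $\sigma(t) = \emptyset$). The second hypothesis holds because (H3) also bounds $|B_i(h_t)| \le d$, so $|\sigma(t)| \le d \le \ell_z / (160 d^4)$, using $\ell_z \ge z = 160 d^5$.

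Now apply Lemma \ref{lemma:f_walk} with this $\sigma$, let $f$ be the resulting map, and set $r_i(h_t) := f(t)$. By property (F3), $(r_i(h_0), \ldots, r_i(h_{n-1}))$ is a walk in $R_z$ along which any $2 q_z$ consecutive vertices form a path (the last block possibly shorter), so in particular $r_i(h_t) \neq r_i(h_{t'})$ for $0 < |t - t'| \le 4$; hence $r_i$ is a homomorphism of $H_i$ into $R_z^4$. For property (H4), fix $h \in V(H)$ and $h' = h_{t'} \in B_i(h)$, say $h = h_t$. If $t' < t$ then $t' \in \sigma(t)$, and property (F2) guarantees that $r_i(h)$ and $r_i(h')$ are at distance at least $5$ in $R_z$, i.e.\ $\{r_i(h), r_i(h')\} \notin R_z^4$. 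If instead $t' > t$, note that the defining conditions of $B_i(\cdot)$ are symmetric in their two arguments, so $h \in B_i(h')$ as well, whence $t \in \sigma(t')$ and the same conclusion follows. This establishes (H4) and finishes the proof of the claim.

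This is the easiest of the three claims, so I do not expect a serious obstacle; the only points that require care are checking that the ``far apart'' clause $|\phi_i(h) - \phi_i(h')| > 2z$ in (H3) is precisely strong enough to place every relevant predecessor inside the window permitted by $\sigma$ (this is where the choice $z = 160 d^5$, together with $q_z$ being an absolute constant, is used), the symmetry of $B_i(\cdot)$ needed to handle the case $t' > t$, and the bound $|B_i(h)| \le d$ from (H3), which is exactly what allows $R_z$ — and therefore the coordinate of $\Gamma$ associated with it — to have constant (rather than $n$-dependent) size. Everything else is routine reuse of Lemma \ref{lemma:f_walk}.
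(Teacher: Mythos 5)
Your proof is correct and follows essentially the same route as the paper: apply Lemma \ref{lemma:f_walk} with $F = R_z$ and $\sigma(t) = \{t' < t : h_{t'} \in B_i(h_t)\}$, verify the two hypotheses on $\sigma$ via the two parts of (H3) and the choice $z = 160d^5$, and then obtain (H4) from (F2) together with the symmetry of $B_i(\cdot)$ to handle $t' > t$. The only difference is expository detail; the argument is identical.
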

\begin{proof}
	Let $\ell_z = |V(R_z)|$ and $q_z = \lceil \log \ell_z / \log 10 \rceil$, and consider the ordering $h_0, \ldots, h_{n-1}$ of $V(H)$ such that $\phi_i(h_t) = t$ for every $t \in [n-1]$. 

	For each $t \in [n]$, we define $\sigma(t)$ as follows,
	$$
		\sigma(t) = \{ t' \in B_i(h_t) \; : \; t' < t \}.
	$$
	From the choice of $z$ and the first part of the property (H3) of $f_i$ we get $|\sigma(t)| \le d \le \ell_z / (160 d^4)$. From the second part of (H3) we have 
	$$
		t' < t - 2z < t - 2q_z < (\lfloor t / q_z \rfloor - 1) q_z,
	$$
	for every $t' \in \sigma(t)$. Therefore, we can apply Lemma \ref{lemma:f_walk} with $F = R_z$ and $\sigma$ to obtain $f \colon [n-1] \rightarrow V(R_z)$. Set $r_i(h_t) := f(t)$ for $t \in [n-1]$. The same argument as in the proof of the previous claim shows that $r_i$ is a homomorphism. Moreover, since for each $h_{t'} \in B_i(h_t)$ we also have $h_t \in B_i(h_{t'})$, we can assume $t' < t$, and by (F2) we have that $f(t)$ and $f(t')$ are at distance at least $5$ in $R_z$. In other words, we conclude $\{r_i(h_t), r_i(h_{t'})\} \notin R_z^4$ as required. This proves the claim.
\end{proof}

\subsection{The algorithmic aspects of the proof} \label{sec:algo}

The proof of Theorem \ref{thm:main} relies on a decomposition guaranteed
by Theorem \ref{thm:decompose} and the existence of a mapping given by
Lemma \ref{lemma:f_walk}. Note that all other steps needed to construct
the homomorphisms $f_i$ and $r_i$, and therefore an induced embedding of
$H$, can be implemented efficiently (i.e. with polynomial running time)
as they only involve computing sets of vertices which satisfy certain
simple conditions.

The proof of Theorem \ref{thm:decompose} relies on the Edmonds-Gallai
decomposition (e.g. see \cite{lovasz2009matching}) and Hall's
criteria for matchings in bipartite graphs. Since the Edmonds-Gallai
decomposition can be obtained using Edmonds' algorithm for finding a
maximum matching in a general graph and a careful inspection of the
proof of Theorem \ref{thm:decompose} shows that all other steps can be
performed efficiently, computing the desired decomposition can also be
done efficiently. Similarly, the proof of Lemma \ref{lemma:f_walk} is
constructive and, in turn, relies on sets of $q$-expanding vertices
given by Lemma \ref{lemma:expand}. However, the proof of Lemma
\ref{lemma:expand} is algorithmic and gives an efficient deterministic
procedure for computing both a set of sufficiently many 
$q$-expanding vertices and a
set of reachable vertices from each vertex in it. We omit further
details.


\vspace{3mm}
\noindent
{\bf Acknowledgements.} The second author would like to thank 
Nemanja \v Skori\'c for useful discussions.

\bibliographystyle{abbrv}
\bibliography{refind}

\end{document}